\newtheorem{theorem}{Theorem}[section]
\newtheorem{lemma}[theorem]{Lemma}
\author{Purevsuren Damba         \and
        Uuganbaatar Ninjbat}
\address{
Mathematics Department\\
The National University of Mongolia\\
Ulaanbaatar, Mongolia}
\email{purevsuren@smcs.num.edu.mn \and uugnaa.ninjbat@gmail.com}
\keywords{Convexity, Intersection graph, Spherical polygon, Side disk}
\subjclass[2010]{52A10, 52C26, 05C10}
\begin{document}
\title[Side Disks of a Spherical Great Polygon]{Side Disks of a Spherical Great Polygon}
\begin{abstract}
Take a circle and mark $n\in\mathbb{N}$ points on it designated as vertices. For any arc segment between two consecutive vertices which does not pass through any other vertex, there is a disk centered at its midpoint and has its end points in the boundary. We analyze intersection behaviour of these disks and show that the number of disjoint pairs among them is between $\frac{(n-2)(n-3)}{2}$ and $\frac{n(n-3)}{2}$ and their intersection graph is a subgraph of a triangulation of a convex $n$-gon.
\end{abstract}
\maketitle

\section{Introduction}
\label{intro}
Huemer and P\'{e}rez-Lantero \cite{huemer} studied intersection behaviour of disks with the sides of a convex $n$-gon as their diameters, which are called as side disks, and showed that their intersection graph is planar; see Theorem 4 in \cite{huemer}. Recall that {\em intersection graph} of a set of figures is a graph in which each vertex represents one and only one of those figures and they are adjacent if and only if the corresponding figures are intersecting. This result has a direct combinatorial consequence: the number of disjoint pairs among these disks is at least $\frac{(n-3)(n-4)}{2}$, which follows from the fact that every planar graph with $n$ vertices has at most $3(n-2)$ edges; see Corollary 11.1(b) in \cite{harary}. 

We believe that the problem of analyzing intersection patterns of side disks is of considerable interest because of the geometrical challenges resulting from its unusual conclusion, i.e. it reflects on disjointness of geometrical figures. In Euclidean geometry a search for new results by replacing line segments with conic sections is often rewarding as illustrated in the following well known results: Pappus's hexagon theorem vs. Pascal's theorem, and Ceva's theorem vs. Haruki's theorem (see Chap. 6 in \cite{chamberland}). Accordingly, in Sect. \ref{circle} instead of a convex $n$-gon we consider a circle partitioned into $n\in\mathbb{N}$ arc segments. The concept of side disk naturally extends to this setting: for each arc segment there is a unique disk centered at its midpoint, and is having its two end points on its boundary. When $n=5$ the resulting configuration already appears in Miquel's five circles theorem (see Chap. 5 in \cite{chamberland}). In Theorem \ref{circle-thm} we show that for $n\geq 3$ there are at least $\frac{(n-2)(n-3)}{2}$ and at most $\frac{n(n-3)}{2}$ disjoint pairs of side disks for the partitioned circle with $n$ arc segments. We also verify that these bounds are tight for all $n\geq 3$, and the intersection graph of these disks is a subgraph of a triangulation of a convex polygon (see Theorem \ref{planar}). 

Throughout this paper we use the following conventions. For any points $X$, $Y$ and $Z$ in the plane, the line passing through $X, Y$ is denoted as $XY$-line, their connecting line segment is denoted as $XY$, and $|XY|$ is its length. $\measuredangle XYZ$ is the angle between $XY$ and $YZ$ measured in the clockwise direction. For any disk $\omega$, its boundary circle is denoted as $\partial(\omega)$ and when there is no ambiguity, we identify a given disk (or circle) with its center $X$ and call it $X$-disk (or $X$-circle), etc. For any plane regions $\omega$ and $\tau$, $(\omega\cap\tau)$ is the region in their intersection, and $\omega\subset\tau$ means the former is included (strictly) in the latter, i.e. every point in $\omega$ is in $\tau$ but not vice versa. For a point $X$ and region $\tau$, $X\in \tau$ means $X$ is located in $\tau$ and $X\notin \tau$ means the opposite.

\section{The main results}
\label{circle}
Let $C_{n}$ be a circle partitioned into $n\in\mathbb{N}$ arc segments by marking $n$ points on it. We identify each marked point as vertex and each arc segment between two consecutive vertices which does not pass through any other vertex as a side. Then, $C_{n}$ is a spherical polygon with vertices at a great circle and we call it as {\em spherical great polygon}; for more on spherical polygons see e.g. Chap. 6.4 in \cite{princeton}. The case where each side has the same length is denoted as $C_{n}^{\star}$. For each side of $C_{n}$, there is a unique disk centered at its midpoint and is having its two end points on the boundary. This is the {\em side disk} of that side and two side disks are {\em neighbouring} if their corresponding sides are adjacent.

Notice that each side of $C_{n}$ divides its disk into two parts, one of which intersects with the region enclosed by $C_{n}$. We call this as {\em inner part} and the other as {\em outer}, and as a convention we include the corresponding arc of $C_{n}$ to the inner part of the side disk, but not to the outer. Then, convexity implies that outer parts of two side disks of $C_{n}$ do not intersect. We shall prove two lemmas. 
\begin{lemma}
\label{fundament}
Let $\omega$ be a given disk and $A$, $B$, $C$ be points on $\partial(\omega)$ such that $AC$-arc is a segment of $AB$-arc. If $\omega_{1}$ and $\omega_{2}$ are the side disks of $AB$-arc and $AC$-arc, respectively, then $(\omega\cap\omega_{2})\subset(\omega\cap\omega_{1})$ and any point in $(\omega\cap\omega_{2})$ except $A$ is in the interior of $\omega_{1}$.
\end{lemma}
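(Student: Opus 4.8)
The plan is to reduce the set containment to a statement about the two boundary arcs of the lens $L:=(\omega\cap\omega_2)$ and then to understand how the two circles $\partial(\omega_1)$ and $\partial(\omega_2)$ meet. First I would fix coordinates by placing the center $O$ of $\omega$ at the origin with radius $R$, and putting $A$, $C$, $B$ at angles $0$, $2\gamma$, $2\beta$ with $0<\gamma<\beta<\pi$; the hypothesis that the $AC$-arc is a segment of the $AB$-arc is exactly the condition $\gamma<\beta$. Then $\omega_1$ is centered at $M_1=R(\cos\beta,\sin\beta)$ with radius $r_1=2R\sin(\beta/2)$, and $\omega_2$ at $M_2=R(\cos\gamma,\sin\gamma)$ with radius $r_2=2R\sin(\gamma/2)$. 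Since $A,C\in\partial(\omega)\cap\partial(\omega_2)$, the set $L$ is the convex lens bounded by two arcs: the $AC$-arc of $\partial(\omega)$, call it $\alpha$, and the arc $\delta$ of $\partial(\omega_2)$ running from $A$ to $C$ inside $\omega$. Because $\omega_1$ is convex and $L$ is contained in the convex hull of $\alpha\cup\delta$, it suffices to prove $\alpha\cup\delta\subseteq\omega_1$ and that $A$ is the only point of $\alpha\cup\delta$ on $\partial(\omega_1)$. Since $L\subseteq\omega$ automatically, this yields $(\omega\cap\omega_2)=L\subseteq(\omega\cap\omega_1)$, and a point of $L$ on $\partial(\omega_1)$ would have to lie on $\alpha\cup\delta$, giving the interior claim as well.

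Second, the arc $\alpha$ is easy to handle: every point $P$ on the $AB$-arc of $\partial(\omega)$ spans a central angle $\vartheta\le\beta$ at $O$ with $M_1$, so $|M_1P|=2R\sin(\tfrac12\vartheta)\le 2R\sin(\beta/2)=r_1$, with equality only at $A$ and $B$. Restricting to the sub-arc $\alpha$, which omits $B$, shows that $\alpha\subseteq\omega_1$ and that $\alpha$ meets $\partial(\omega_1)$ only at $A$; in particular every interior point of the $AB$-arc, and hence $C$ itself, lies in the interior of $\omega_1$.

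Third — the crux — I would control $\delta$ by locating the two intersection points of $\partial(\omega_1)$ and $\partial(\omega_2)$. One of them is $A$. Writing the equations of the two circles in the form $|X|^2-2X\cdot M_i+R^2(2\cos\theta_i-1)=0$, subtracting to obtain their radical axis, and intersecting that line with $\partial(\omega_2)$ produces the second intersection point $Q$. The plan is to show $|OQ|>R$, i.e. that $Q$ lies strictly outside $\omega$. A direct computation reduces this inequality to $\sin\beta-\sin\gamma<\sin(\beta-\gamma)$, and via product-to-sum identities the difference $\sin\beta-\sin\gamma-\sin(\beta-\gamma)$ collapses to $-4\sin\tfrac{\beta-\gamma}{2}\sin\tfrac{\beta}{2}\sin\tfrac{\gamma}{2}$, which is negative for all $0<\gamma<\beta<\pi$. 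Consequently $\partial(\omega_1)\cap\partial(\omega_2)\cap\overline{\omega}=\{A\}$, so the arc $\delta\subseteq\overline{\omega}$ meets $\partial(\omega_1)$ only at $A$. Since $\delta$ is connected and its endpoint $C$ lies strictly inside $\omega_1$ by the previous step, the whole of $\delta\setminus\{A\}$ must lie in the interior of $\omega_1$, completing the boundary analysis.

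I expect the third step to be the main obstacle, since it carries the genuine geometric content: the two side disks $\omega_1$ and $\omega_2$ do cross transversally at $A$ (they are not tangent there), so the containment can only hold because their second crossing escapes the disk $\omega$. Once that single trigonometric inequality is established, the convexity and connectivity packaging in the first two steps is routine, and the strictness of the inclusion $(\omega\cap\omega_2)\subset(\omega\cap\omega_1)$ follows because $\omega\cap\omega_1$ contains points near $B$ that are not in $\omega\cap\omega_2$.
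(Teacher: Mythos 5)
Your proof is correct, and it takes a genuinely different route from the paper's. The paper argues synthetically: it notes that the center $O_2$ of $\omega_2$ lies on the $AO_1$-arc, shows $\triangle AO_2O_1$ is obtuse at $O_2$ so that $|AO_1|>|AO_2|$, introduces an auxiliary disk $\omega_3$ of radius $|AO_2|$ centered on the segment $AO_1$ (internally tangent to $\omega_1$ at $A$, hence $\omega_3\subset\omega_1$), and then treats $\omega_2$ as a rotation of $\omega_3$ about $A$ that moves the center from the interior point $O_3$ of $\omega$ to the boundary point $O_2$, concluding $(\omega\cap\omega_2)\subset(\omega\cap\omega_3)\subset(\omega\cap\omega_1)$. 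You instead set up coordinates and reduce to the boundary of the lens $\omega\cap\omega_2$: the piece on $\partial(\omega)$ is controlled by your central-angle estimate, and the piece on $\partial(\omega_2)$ by showing that the second intersection point $Q$ of $\partial(\omega_1)$ and $\partial(\omega_2)$ lies strictly outside $\omega$, followed by a connectedness argument. Your key computation, which you only sketch, does close: $Q$ is the reflection of $A$ in the line of centers $M_1M_2$, namely $Q=2R\cos\frac{\beta-\gamma}{2}\,e^{i(\beta+\gamma)/2}-R\,e^{i(\beta+\gamma)}$ in complex notation, which gives $|OQ|^2-R^2=8R^2\cos\frac{\beta-\gamma}{2}\sin\frac{\beta}{2}\sin\frac{\gamma}{2}>0$; this quantity equals $2R^2\cot\frac{\beta-\gamma}{2}\,\bigl(\sin(\beta-\gamma)-\sin\beta+\sin\gamma\bigr)$, so it is indeed equivalent to the trigonometric inequality you state. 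Comparing the two approaches: the paper's proof is coordinate-free and short, but its rotation step (that rotating $\omega_3$ into $\omega_2$ about $A$ can only shrink the intersection with $\omega$) is asserted rather than justified, whereas every step of your argument is mechanically checkable; your route also delivers two things the paper glosses over, namely an explicit proof of strictness of the inclusion (points of $\omega\cap\omega_1$ near $B$ avoid $\omega_2$) and the sharp standalone fact that the boundaries of nested side disks, which necessarily cross transversally at $A$, can only re-cross outside $\omega$. The one improvement you should make is to write out the radical-axis/reflection computation rather than leave it as ``a direct computation''.
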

\begin{proof}
Let $O_{1}$ and $O_{2}$ be the centers of $\omega_{1}$ and $\omega_{2}$, respectively. Since $AC$-arc is contained in $AB$-arc, $O_{2}$ must be on the $AO_{1}$-arc not passing through $B$ (see Fig. \ref{funda}). Since $O_{1}$ is the mid-point of $AB$-arc, $AO_{1}$-arc is always less than a half of $\partial(\omega)$. Thus, $\measuredangle O_{1}O_{2}A>\frac{\pi}{2}$ and $\triangle AO_{2}O_{1}$ is an obtuse triangle with $|AO_{1}|>|AO_{2}|$. Let $O_{3}$ be the point on $AO_{1}$ with $|AO_{3}|=|AO_{2}|$, and $\omega_{3}$ be the disk centered at $O_{3}$ and is having $A$ on its boundary (see the dashed disk in Fig. \ref{funda}). Since $A$, $O_{3}$ and $O_{1}$ are collinear and $|AO_{1}|>|AO_{3}|$, we have $\omega_{3}\subset\omega_{1}$ and $A=\partial(\omega_{3})\cap \partial(\omega_{1})$. Then, we can conclude that $(\omega\cap\omega_{3})\subset(\omega\cap\omega_{1})$, and the only point in $(\omega\cap\omega_{3})$ which is on $\partial(\omega_{1})$ is $A$. 
\begin{figure}[h]
\centering
\includegraphics[height=1.5 in, keepaspectratio = true]{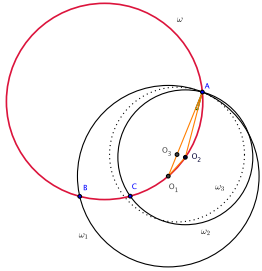}
\caption{Illustration for Lemma \ref{fundament}}
\label{funda}
\end{figure} 
On the other hand, $\omega_{2}$ is a rotation of $\omega_{3}$ around $A$ in the direction to move its center from an interior point of $\omega$, $O_{3}$, to a boundary point, $O_{2}$. Thus, we must have $(\omega\cap\omega_{2})\subset(\omega\cap\omega_{3})$, which implies $(\omega\cap\omega_{2})\subset(\omega\cap\omega_{3})\subset (\omega\cap\omega_{1})$. Finally, from our proof its clear that any point in $(\omega\cap\omega_{2})$ except $A$ must be in the interior of $\omega_{1}$.
\end{proof}
\begin{lemma}
\label{quad-out}
Let $\omega$ be a given disk and $A$, $B$, $C$, $D$ be four points marked subsequently on $\partial(\omega)$. Further let $\omega_{ab}$ be the side disk corresponding to the $AB$-arc, and let $\omega_{bc}$, $\omega_{cd}$ and $\omega_{da}$ be defined analogously (see Fig. \ref{quad-out-fig}). Let $X$ be the intersection point of $\partial(\omega_{da})$ and $\partial(\omega_{cd})$, other than $D$; and $Y$, $Z$ and $T$ be defined analogously for the pairs $\partial(\omega_{cd})$ and $\partial(\omega_{bc})$, $\partial(\omega_{bc})$ and $\partial(\omega_{ab})$, and $\partial(\omega_{ab})$ and $\partial(\omega_{da})$, respectively. Then,
\begin{itemize}
\item[(a)] $X, Y \notin \omega_{ab}$, $Y,Z\notin\omega_{da}$, $Z,T\notin\omega_{cd}$ and $X, T\notin\omega_{bc}$; and
\item[(b)] Quadrilateral $XYZT$ is a rectangle.
\end{itemize}
\end{lemma}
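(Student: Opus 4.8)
The plan is to put everything on the unit circle in the complex plane. I would take $\omega$ to be the unit circle centered at the origin $O$, write $A=e^{ia}$, $B=e^{ib}$, $C=e^{ic}$, $D=e^{id}$ with $a<b<c<d<a+2\pi$, and identify each point with a complex number, using $\bar z=1/z$ on the circle. The four side-disk centers are the arc midpoints $P=e^{i(a+b)/2}$, $Q=e^{i(b+c)/2}$, $R=e^{i(c+d)/2}$ and $S=-e^{i(d+a)/2}$ (the minus sign recording that the $DA$-arc runs past angle $2\pi$). The computational engine is the elementary fact that the second intersection of two circles through a common point is the reflection of that point across the line joining the two centers. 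Applying this to $\omega_{cd}$ and $\omega_{da}$ (both through $D$) and using the unit-circle identity $\tfrac{S-R}{\overline{S-R}}=-RS$, I would obtain the closed form $X=R+S-\tfrac{RS}{D}$, and analogously $Y=Q+R-\tfrac{QR}{C}$, $Z=P+Q-\tfrac{PQ}{B}$, $T=P+S-\tfrac{PS}{A}$.

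For part (b) these closed forms make the rectangle almost automatic. First I would read off $X+Z=Y+T=P+Q+R+S$, so the diagonals $XZ$ and $YT$ share the midpoint $\tfrac12(P+Q+R+S)$ and $XYZT$ is a parallelogram. For the right angle I would compute the two adjacent side vectors and watch them factor: $Y-X=(e^{ic/2}-e^{id/2})(e^{ib/2}-e^{ia/2})$ and $Z-Y=(e^{ib/2}-e^{ic/2})(e^{ia/2}+e^{id/2})$. Converting each factor by the half-angle identities $e^{ix}-e^{iy}=2ie^{i(x+y)/2}\sin\tfrac{x-y}{2}$ and $e^{ix}+e^{iy}=2e^{i(x+y)/2}\cos\tfrac{x-y}{2}$, the quotient $(Y-X)/(Z-Y)$ collapses to $i$ times $\tfrac{\sin\frac{b-a}{4}\sin\frac{d-c}{4}}{\sin\frac{c-b}{4}\cos\frac{d-a}{4}}$, a positive real multiple of $i$ because every argument lies in $(0,\tfrac{\pi}{2})$ under the ordering $a<b<c<d<a+2\pi$. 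Hence the adjacent sides are perpendicular, and a parallelogram with one right angle is a rectangle.

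For part (a), to prove say $X\notin\omega_{ab}$ I would show the power $|XP|^2-r_{ab}^2$ is positive. The decisive simplification is that $X\in\partial(\omega_{da})$, so the power of $X$ with respect to $\omega_{da}$ vanishes; subtracting it, $|XP|^2-r_{ab}^2$ becomes the difference of the powers of $X$ with respect to $\omega_{ab}$ and $\omega_{da}$, which is an affine functional of $X$ (the radical-axis functional of the two disks meeting at $A$). Substituting the closed form for $X$, recording $r_{ab}^2=2-2\cos\tfrac{b-a}{2}$ and $r_{da}^2=2+2\cos\tfrac{d-a}{2}$, and regrouping the resulting cosines by sum-to-product, I expect the expression to collapse to $16\sin\tfrac{c-a}{4}\,\sin\tfrac{c-b}{4}\,\cos\tfrac{d-a}{4}\,\cos\tfrac{d-b}{4}$, which is manifestly positive. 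The remaining seven inequalities then follow either by the same computation or, more economically, by noting that the whole construction is equivariant under the relabelings of $A,B,C,D$ that preserve their cyclic order: the cyclic shift $A\mapsto B\mapsto C\mapsto D\mapsto A$ together with a reversal permutes the eight assertions transitively, so one computation suffices.

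The main obstacle is not conceptual but bookkeeping. The antipodal sign in $S=-e^{i(d+a)/2}$ (forced by the wrap-around of the $DA$-arc) propagates into every closed form, and a single sign slip there would corrupt both parts; I would verify the forms against the symmetric case $a,b,c,d=0,\tfrac{\pi}{2},\pi,\tfrac{3\pi}{2}$ before proceeding. The only genuinely delicate step is the trigonometric collapse in part (a): turning a sum of six cosines into a single product of two sines and two cosines requires choosing the sum-to-product pairings correctly, after which positivity is immediate from the ordering. By contrast, once the closed forms are established the rectangle in part (b) is essentially forced.
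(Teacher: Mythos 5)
Your proposal is correct, and it takes a genuinely different route from the paper. I checked the key formulas: the closed forms $X=R+S-\tfrac{RS}{D}$ etc.\ follow from the reflection description of the second intersection point; the cross terms cancel exactly ($\tfrac{RS}{D}+\tfrac{PQ}{B}=0=\tfrac{QR}{C}+\tfrac{PS}{A}$), giving $X+Z=Y+T=P+Q+R+S$; your factorizations of $Y-X$ and $Z-Y$ are right and the quotient is indeed a positive real multiple of $i$ under the ordering $a<b<c<d<a+2\pi$; and the trigonometric collapse in part (a) is a genuine identity (writing $u=\tfrac{c-a}{4}$, $v=\tfrac{c-b}{4}$, $w=\tfrac{d-c}{4}$, both sides expand to $1+\cos(2u-2v)-\cos 2u-\cos 2v+\cos(2u+2w)+\cos(2v+2w)-\cos(2u+2v+2w)-\cos 2w$), so the power of $X$ with respect to $\omega_{ab}$ equals the manifestly positive product you state, and your dihedral symmetry argument legitimately transports this single computation to all eight assertions. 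The paper instead argues synthetically: for (b) it proves by angle chasing with arc midpoints that $X$, $Y$, $Z$, $T$ are the incenters of $\triangle ADC$, $\triangle DCB$, $\triangle CBA$, $\triangle BAD$, and then cites the known result (Prasolov, Problem 6.13) that these four incenters form a rectangle; for (a) it introduces the auxiliary side disk $\omega_{bd}$ of the $BD$-arc, whose boundary passes through the incenter $Y$, and applies Lemma \ref{fundament} to conclude $Z$ lies strictly inside $\omega_{bd}$ and hence outside $\omega_{cd}$. What your approach buys: it is entirely self-contained (no reliance on Lemma \ref{fundament}, no external citation, no incenter characterization), it yields exact quantitative information (the power is an explicit positive product, so the exclusions are strict), and one computation covers all cases. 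What the paper's approach buys: geometric insight --- the identification of $X,Y,Z,T$ as incenters connects the configuration to the Japanese-theorem/Miquel circle of ideas mentioned in the remarks --- and it recycles Lemma \ref{fundament}, which the paper must establish anyway for Theorem \ref{circle-thm}, at the cost of leaning on a cited problem book for the rectangle. The only caution for your write-up is that the antipodal sign $S=-e^{i(d+a)/2}$ must be carried consistently, as you note; with that done, every step you outline goes through.
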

\begin{proof}
To prove Lemma \ref{quad-out} (a), it suffices to show that $Z\notin\omega_{cd}$ as similar argument applies to the others. Consider Fig. \ref{quad-out-fig} and let the dashed disk $\omega_{bd}$ be the disk corresponding to $BD$-arc. 
\begin{figure}[h]
\centering
\includegraphics[height=1.7 in, keepaspectratio = true]{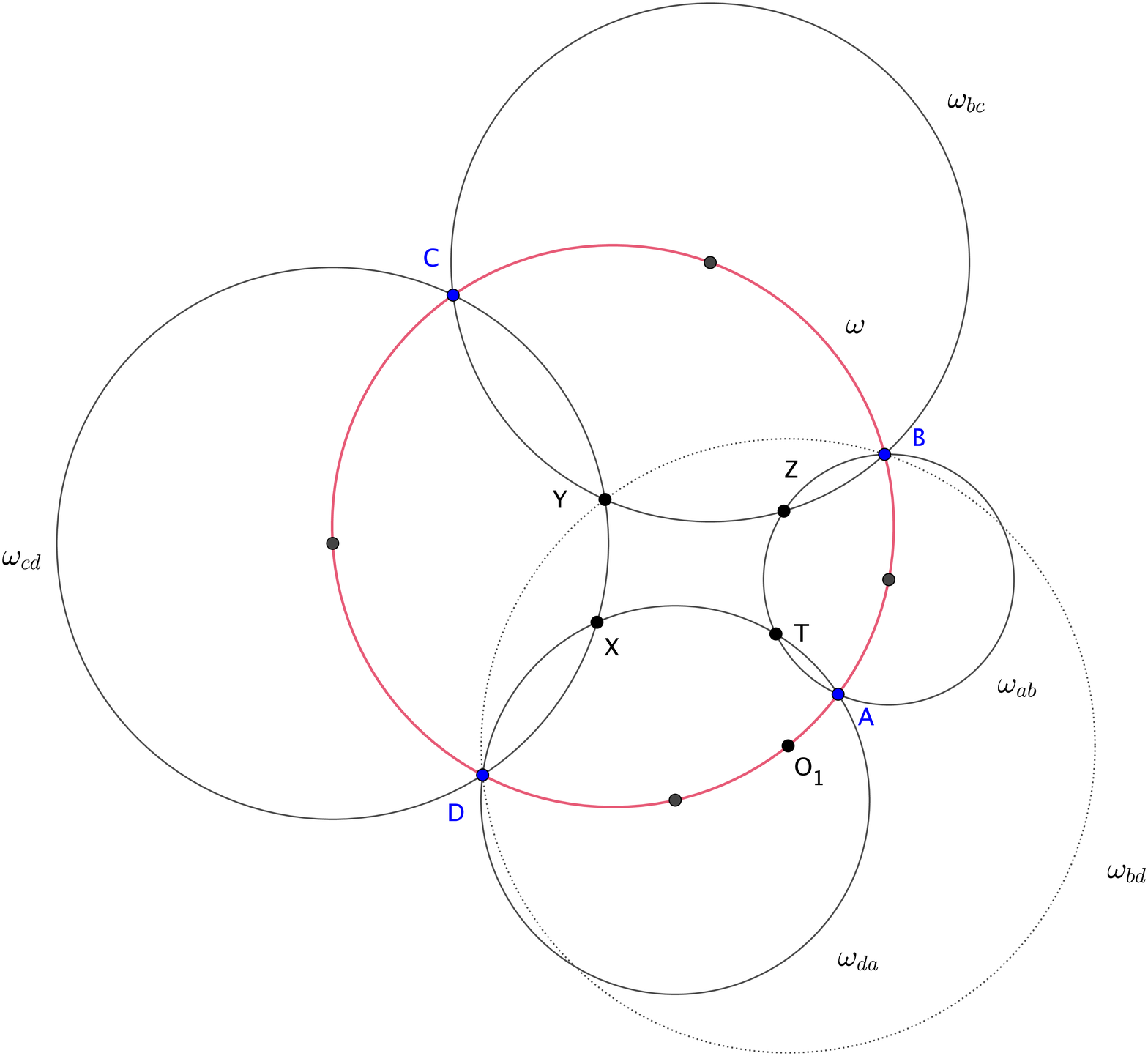}
\caption{Illustration for Lemma \ref{quad-out} (a)}
\label{quad-out-fig}
\end{figure}
It is well known, and can easily be proven that $\partial(\omega_{bd})$ passes through $Y$, which is the incenter of $\triangle BCD$ (see below). Notice that all conditions of Lemma \ref{fundament} are met for $\omega$, $\omega_{bd}$ and $\omega_{ab}$. Thus, $Z\in(\omega\cap\omega_{ab})$ must be located in the interior of $\omega_{bd}$. But then $Z\notin\omega_{cd}$ as the only point which is in $(\omega_{cd}\cap\omega_{bd}\cap\omega_{bc})$ is $Y$, and $Y$ and $Z$ are distinct points as $Y\in \partial(\omega_{bd})$ while $Z\notin \partial(\omega_{bd})$. This proves Lemma \ref{quad-out} (a).

Let $H$, $G$, $F$ and $W$ be the centers of $\omega_{ab}$, $\omega_{bc}$, $\omega_{cd}$ and $\omega_{da}$, respectively. We claim that $X$, $Y$, $Z$ and $T$ are the incenters of $\triangle ADC$, $\triangle DCB$, $\triangle CBA$ and $\triangle BAD$, respectively (see Fig. \ref{quad-rect-fig}). Notice that since $F$ and $W$ are the centers of two circles intersecting at $D$ and $X$, $FW$ is a perpendicular bisector of $DX$ and $\measuredangle XFD=2 \measuredangle WFD$. Since $W$ is the midpoint of $AD$-arc, we also have $\measuredangle WFD=\frac{1}{2} \measuredangle AFD$, which implies $\measuredangle XFD=\measuredangle AFD$. Thus, points $F$, $X$ and $A$ are collinear. Since $F$ is the midpoint of $DC$-arc, $\measuredangle DAF=\measuredangle FAC$, hence $AF$ is a bisector of $\angle DAC$.
\begin{figure}[h]
\centering
\includegraphics[height=1.7 in, keepaspectratio = true]{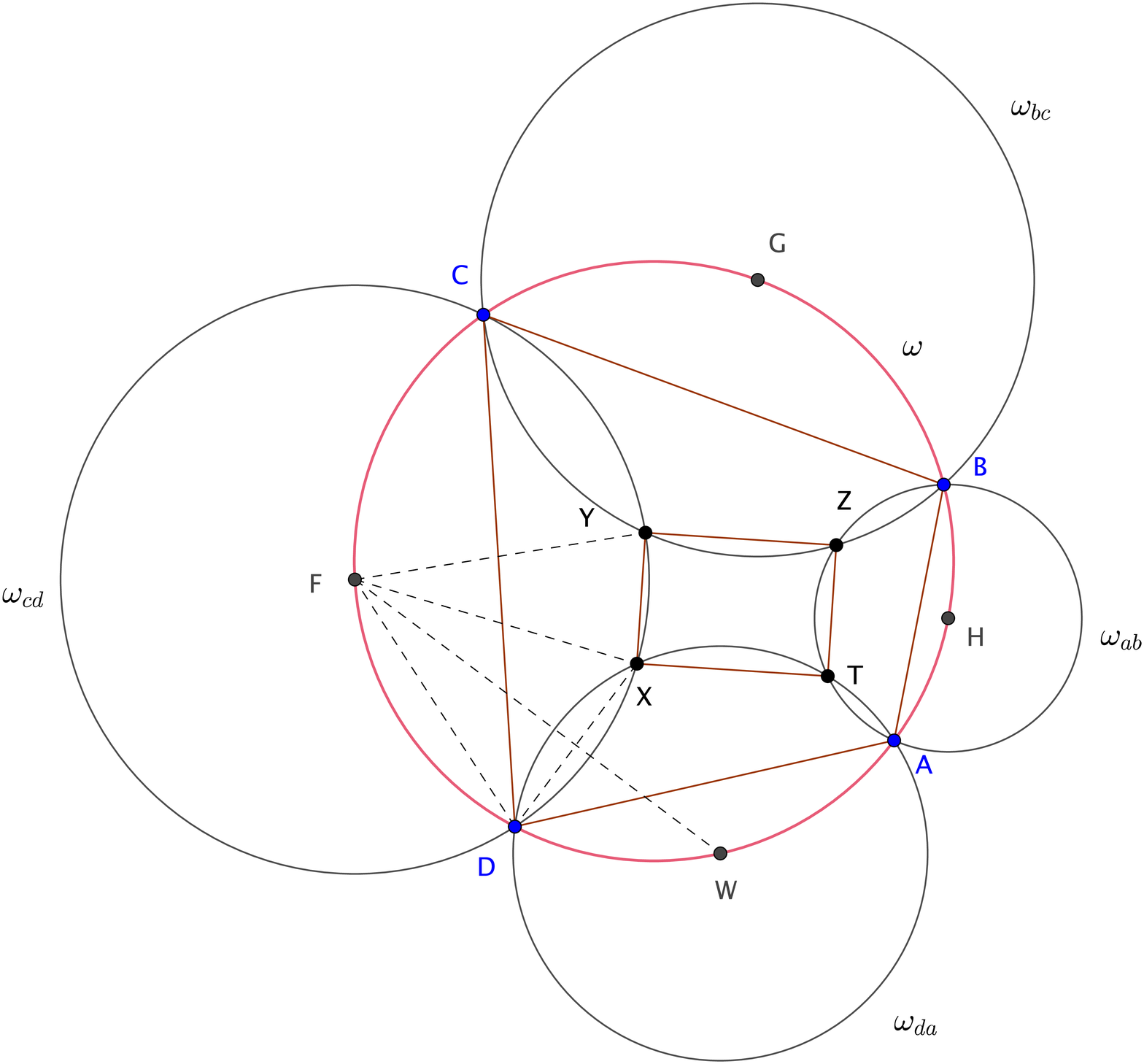}
\caption{Illustration for Lemma \ref{quad-out} (b)}
\label{quad-rect-fig}
\end{figure}
Similar argument shows that $W$, $X$ and $C$ are collinear and $CW$ is a bisector of $\angle DCA$. Thus, $X$ is the incenter of $\triangle ADC$. By the same token, we may conclude that $Y$, $Z$ and $T$ are the incenters of $\triangle DCB$, $\triangle CBA$ and $\triangle BAD$, respectively. Then, the result in Lemma \ref{quad-out} (b) follows from Problem 6.13 in \cite{prasolov}.
\end{proof}
{\it Remarks}: From Lemma \ref{fundament} it follows that inner part of a side disk of $C_{n}$ is always contained in $C_{n}$. Since outer parts of side disks of $C_{n}$ are disjoint, this implies that \emph{two side disks of $C_{n}$ with $n\geq 2$ intersect if and only if they intersect in the region enclosed by $C_{n}$.} To our knowledge, the only widely known result directly related to Lemma \ref{quad-out} is Miquel's four circles theorem which states that when $\omega_{ab}$, $\omega_{bc}$, $\omega_{cd}$ and $\omega_{da}$ are not necessarily centered on $\partial(\omega)$, $X, Y, Z, T$ are concyclic (see \cite{wells}; p.151). 

For $C_{n}$, let $d(C_{n})$ be the number of disjoint pairs among its side disks. Our main result is as follows.
\begin{theorem}
\label{circle-thm}
For $n\geq 3$, $\frac{(n-2)(n-3)}{2}\leq d(C_{n})\leq \frac{n(n-3)}{2}$.
\end{theorem}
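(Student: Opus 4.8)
The plan is to count disjoint pairs among the $n$ side disks of $C_n$ by combining a global invariant with the intersection structure established in the two lemmas. First I would fix some notation: let $G(C_n)$ denote the intersection graph of the $n$ side disks, with an edge between two disks exactly when they intersect. Since there are $\binom{n}{2}$ pairs in total, counting disjoint pairs is equivalent to counting non-edges of $G(C_n)$, so $d(C_n) = \binom{n}{2} - e(C_n)$ where $e(C_n)$ is the number of edges (intersecting pairs). Thus the stated bounds on $d(C_n)$ are equivalent to the bounds $n-3 \le e(C_n) \le 2(n-3)$ on the number of intersecting pairs, and I would aim to prove these two edge-bounds instead.

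For the lower bound on $e(C_n)$ (equivalently the upper bound $d(C_n) \le \frac{n(n-3)}{2}$), I would first note that any two neighbouring side disks share a vertex of $C_n$ on both their boundaries, hence always intersect. Since $C_n$ has $n$ pairs of adjacent sides, this contributes $n$ intersecting pairs from neighbours alone, giving $e(C_n) \ge n$. A cleaner route to the exact claimed bound is to produce a subgraph of $G(C_n)$ with at least $n-3$ guaranteed edges that can be realized with equality; so I would instead argue that $e(C_n)$ can be as small as $n$ — wait, since $n > n-3$, the binding constraint for the upper bound on $d(C_n)$ is really $e(C_n) \ge n$, but the claimed bound $d(C_n) \le \frac{n(n-3)}{2}$ rearranges to $e(C_n) \ge n$. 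So the lower bound on edges is exactly the observation that \emph{all $n$ neighbouring pairs intersect}, and I would prove tightness by exhibiting a configuration (plausibly $C_n^\star$, the regular one) in which \emph{only} neighbouring pairs intersect, so $e(C_n) = n$ and $d(C_n) = \binom{n}{2} - n = \frac{n(n-3)}{2}$.

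For the upper bound on $e(C_n)$ (equivalently the lower bound $d(C_n)\ge\frac{(n-2)(n-3)}{2}$, i.e. $e(C_n) \le 2(n-3)$), I would invoke the planarity/triangulation structure announced as Theorem \ref{planar}: if $G(C_n)$ is a subgraph of a triangulation of a convex $n$-gon, then it has at most $2n-3$ edges; but to reach the sharper bound $2(n-3) = 2n-6$ I would use the Remark together with Lemma \ref{quad-out}. The key mechanism is Lemma \ref{quad-out}(a): for four consecutive vertices $A,B,C,D$, the points $X,Y,Z,T$ being outside the appropriate disks means that certain diagonal (non-neighbouring) side disks fail to intersect, forbidding ``crossing'' edges and forcing $G(C_n)$ to embed without crossings inside the polygon, with every intersecting non-neighbour pair corresponding to a non-crossing diagonal. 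The main obstacle, which I expect to be the heart of the argument, is converting the local geometric information of Lemma \ref{quad-out} into the global combinatorial statement that $G(C_n)$ is planar (outerplanar-like, realized as a subgraph of a triangulation): one must show that whenever two side disks $\omega_i,\omega_j$ intersect, no ``nested'' pair of sides separating them can also intersect in a way that creates a crossing, and that the $n$ outer parts being disjoint (the Remark) forces all intersections to happen inside $C_n$ in a consistent, planar fashion. Once that structural fact is in hand, Euler's formula for a triangulated convex $n$-gon gives at most $2n-3$ edges including the $n$ boundary edges, hence at most $n-3$ interior diagonals, so $e(C_n) \le n + (n-3) = 2n-3$; tightening to $2(n-3)$ and verifying tightness of the lower bound on $d(C_n)$ by an explicit extremal configuration completes the proof.
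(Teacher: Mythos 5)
Your reduction to edge counts and your treatment of the upper bound $d(C_{n})\le\frac{n(n-3)}{2}$ are fine and match the paper: neighbouring side disks share a vertex of $C_{n}$, so all $n$ neighbouring pairs intersect, giving $e(C_{n})\ge n$, which is exactly equivalent to that bound (tightness via $C_{n}^{\star}$ is not needed for the theorem itself). Your plan for the lower bound on $d(C_{n})$ also has the right combinatorial skeleton --- identify the disks with vertices of a convex $n$-gon, show that intersecting non-neighbouring pairs correspond to pairwise non-crossing diagonals, and use the fact that a set of non-crossing diagonals has size at most $n-3$ --- and this is precisely the paper's STEP 3. But your proposal has a genuine gap exactly at what you yourself call ``the heart of the argument'': the claim that two intersecting pairs can never cross is never proved. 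You cannot invoke Theorem \ref{planar} for it, because in the paper that theorem is a \emph{corollary} of this very argument, so the citation is circular; and Lemma \ref{quad-out}(a) by itself does not yield it --- it only says that the intersection points $X,Y,Z,T$ of neighbouring boundaries avoid certain disks, not that any pair of opposite disks is disjoint. The paper closes this gap in two nontrivial steps. STEP 1 ($n=4$): for any four marked points, at least one of the two opposite pairs of side disks is disjoint; this uses Lemma \ref{quad-out}(b) (that $XYZT$ is a rectangle), identifies the center $E$ of that rectangle as the intersection of the two lines of centers, and shows by an angle comparison ($\measuredangle XEY\le\frac{\pi}{2}$ against $\measuredangle XNY>\frac{\pi}{2}$) that $E$ lies outside both disks of one opposite pair, whence those disks are disjoint. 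STEP 2: if $\omega_{i}$ and $\omega_{j}$ intersect, apply STEP 1 to their four endpoints to conclude that the two ``bridging'' side disks are disjoint, then use Lemma \ref{fundament} (a nested side disk is contained, inside $\omega$, in the larger one) together with the Remark (side disks can only meet inside $C_{n}$) to conclude that every disk strictly between $\omega_{i}$ and $\omega_{j}$ on one side is disjoint from every disk on the other side. Without these two steps your outline restates the goal rather than proving it.

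Separately, there is an arithmetic slip: $d(C_{n})\ge\frac{(n-2)(n-3)}{2}$ is equivalent to $e(C_{n})\le 2n-3$, not to $e(C_{n})\le 2(n-3)=2n-6$. The latter is in fact false in general: in the paper's configuration $C_{n}^{\triangle}$ one disk meets all $n-1$ others and the remaining intersections are the $n-2$ neighbouring ones among the rest, giving $e=(n-1)+(n-2)=2n-3$. So the bound $e(C_{n})\le n+(n-3)=2n-3$ that you obtain at the end from the triangulation count is already exactly what is needed, and the ``tightening to $2(n-3)$'' you propose is both unnecessary and impossible.
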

\begin{proof} Since $d(C_{3})=0$, as all side disks are neighbouring, we assume $n\geq4$. We prove the lefthand inequality in three steps. 

\vspace{10pt}\underline{STEP 1}: Let us prove that $1\leq d(C_{4})$. 

\vspace{10pt}\noindent Let $A, B, C, D$ be the points marked on $C_{4}$ and $F, G, H, W$ be the centers of its four side disks. Further let $X, Y, Z, T$ be points other than $A, B, C, D$ in which pairs of neighbouring side disks intersect by their boundaries (see Fig. \ref{circle-fig}). By Lemma \ref{quad-out} (b), we know that $XYZT$ is a rectangle. Let $E=XZ\cap YT$, i.e. the intersection of the diagonals of $XYZT$. We claim that $E=WG\cap FH$. Since $|FY|=|FX|$, $|HZ|=|HT|$ and $XYZT$ is a rectangle, points $F$, $H$ and the mid-points of the sides $XY$ and $ZT$ are collinear. Similarly, $G$, $W$ and the midpoints of $ZY$ and $TX$ are collinear. Thus, $FH$ and $GW$ intersect in point which is intersection of two line segments connecting the midpoints of opposite sides of $XYZT$, which must be $E$. This proves our claim.

Since $\measuredangle XEY+\measuredangle YEZ=\pi$ one of these two angles (summands) must be at most $\frac{\pi}{2}$, and without loss of generality we may assume that $\measuredangle XEY\leq \frac{\pi}{2}$. Then, we claim that the side disks centered at $F$ and $H$ are disjoint. To see this, it suffices to prove that $E$ is located outside of these side disks, as then we have $|FE|>r_{F}$ and $|EH|>r_{H}$, hence, $|FH|=|FE|+|EH|>r_{F}+r_{H}$, where $r_{F}$ and $r_{H}$ are the radii of the disks to be shown as disjoint. Let us then prove that $E\notin F\text{-disk}$ and a similar argument shows that $E\notin H\text{-disk}$. By Lemma \ref{quad-out} (a), $XY$-line separates $F$ and rectangle $XYZT$. Since $E$ is an interior point of $XYZT$, we can conclude that $XY$-line strictly separates $E$ and $F$.

\begin{figure}[h]
\centering
\includegraphics[height=2.0 in, keepaspectratio = true]{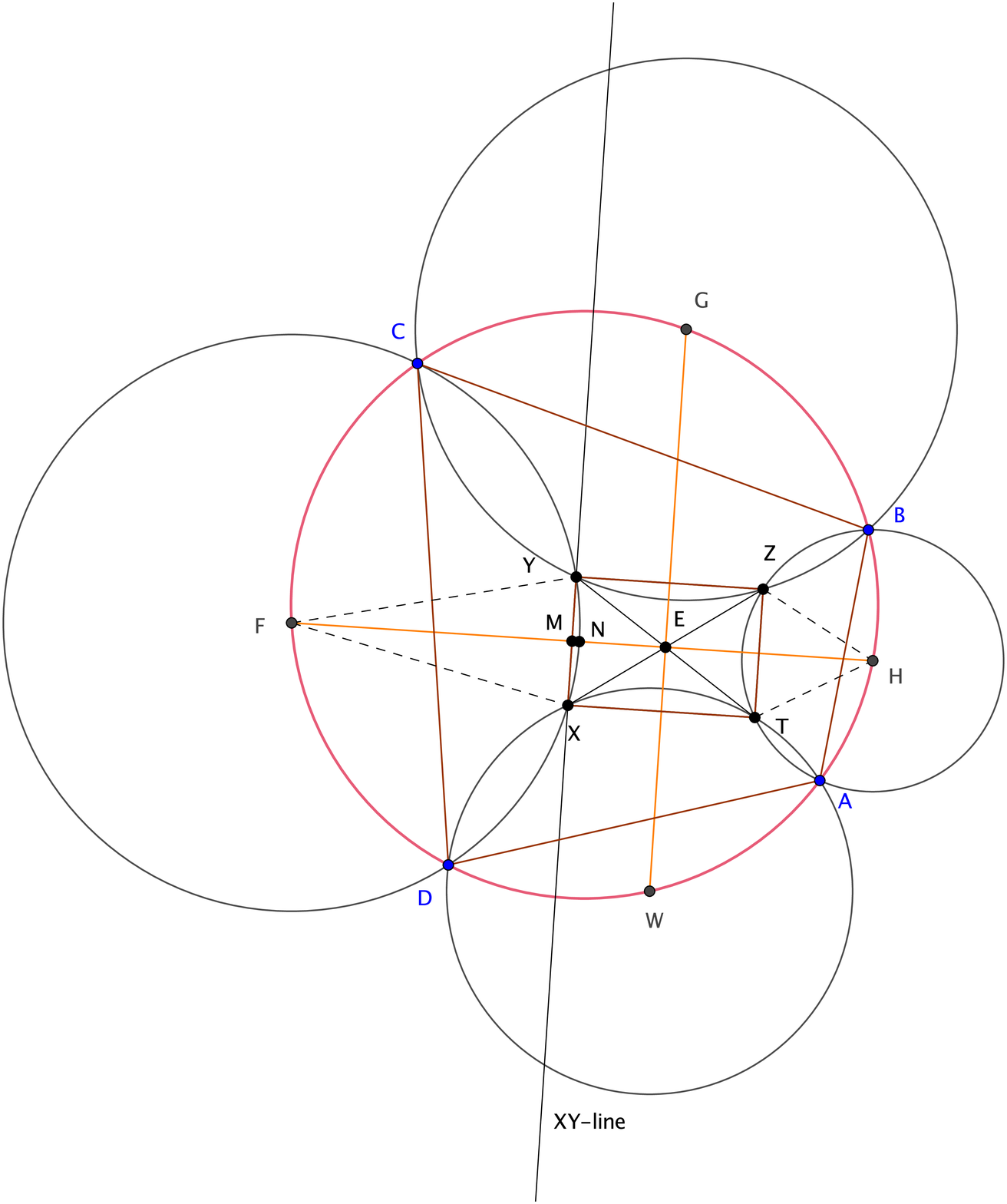}
\caption{Side disks of $C_{4}$}
\label{circle-fig}
\end{figure}

Let $M$ be the midpoint of $XY$ and $N=\partial(F\text{-disk})\cap FH$. It is clear that $N$ and $E$ lie on the same half-plane with respect to $XY$-line, as both are strictly separated from $F$ by the line. We know that $FH$ passes through the midpoints of $XY$ and  $ZT$, thus it must be orthogonal to $XY$. Recall that, both $E$ and $N$ lie on $FH$. Then, $\measuredangle XEY\leq \frac{\pi}{2}$ together with the observation that $\measuredangle XNY> \frac{\pi}{2}$ imply that $|ME|>|MN|$.\footnote{Take the circle centered at $F$. It is clear that $XY$ is strictly shorter than its diameter. Then, for any $N^{\star}$ lying on the minor arc connecting $X$ and $Y$, we have $\measuredangle XN^{\star}Y> \frac{\pi}{2}$.} Then, $|FE|=|FM|+|ME|>|FM|+|MN|=r_{F}$. Thus, $E$ is outside of $F$-disk. This proves our last claim and completes STEP 1.

\vspace{10pt}\underline{STEP 2}: Consider $C_{n}$ and its side disks, labeled as $\omega_{1}, ..., \omega_{n}$ in the clockwise direction. For any $i,j=1,2,...,n$, let $(\omega_{i}, \omega_{j})$ be the set of disks strictly between $\omega_{i}$ and $\omega_{j}$, in the clockwise direction. We shall prove that if $\omega_{i}$ and $\omega_{j}$ intersect, then any disk in $(\omega_{i}, \omega_{j})$ is disjoint from any one in $(\omega_{j}, \omega_{i})$.

\vspace{10pt}\noindent We can assume that $\omega_{i}$ and $\omega_{j}$ are non-neighbouring as the result is trivial otherwise. Let $\omega_{i}$, $\omega_{j}$ be side disks of $AB$-arc and $CD$-arc, respectively. Then, $AB$-arc and $CD$-arc are disjoint and we can also assume that $A,B,C,D$ are located subsequently in the clockwise order. Let $\omega_{bc}$ and $\omega_{da}$ be the side disks of $BC$-arc and $DA$-arc, respectively (see Fig. \ref{step2}). Then by STEP 1 there must be a disjoint fair among $\omega_{i}$, $\omega_{j}$, $\omega_{bc}$ and $\omega_{da}$. But since the former two intersect, it must the latter two which are disjoint. By Lemma \ref{fundament}, we know that when restricted to the region enclosed by $C_{n}$, $\omega_{bc}$ contains all disks in $(\omega_{i}, \omega_{j})$, and similarly, $\omega_{da}$ contains all disks in $(\omega_{j}, \omega_{i})$.  
\begin{figure}[h]
\centering
\includegraphics[height=1.6 in, keepaspectratio = true]{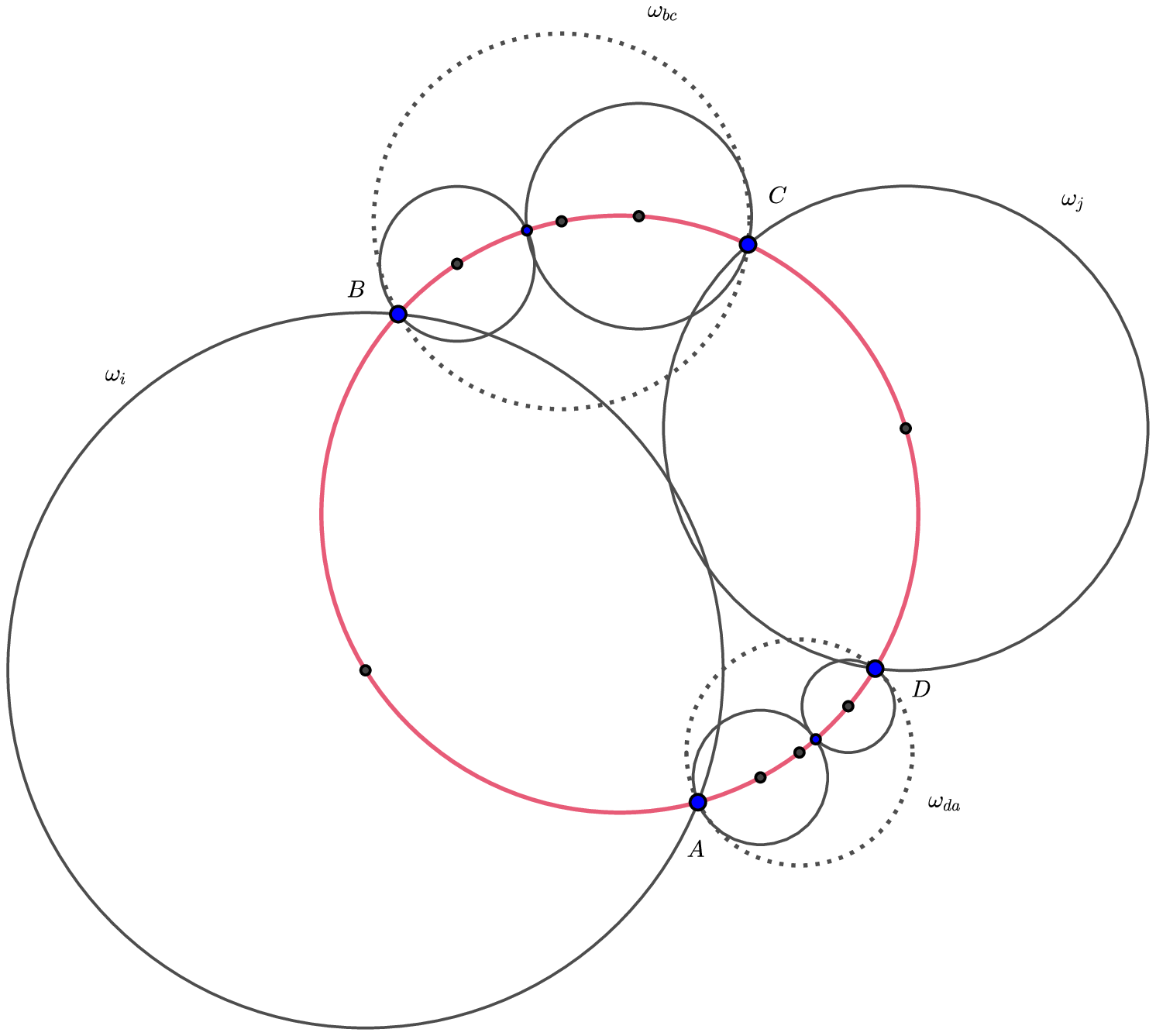}
\caption{$\omega_{i}$ and $\omega_{j}$ intersect}
\label{step2}
\end{figure}
This implies that, none of the disks in $(\omega_{i}, \omega_{j})$ intersects with a disk in $(\omega_{j}, \omega_{i})$ in the region enclosed by $C_{n}$. But since two side disks intersect only in that region, we can conclude that any disk in $(\omega_{i}, \omega_{j})$ is disjoint from any one in $(\omega_{j}, \omega_{i})$. This completes STEP 2.

\vspace{10pt}\underline{STEP 3}: Let us prove that for $n\geq 4$, $\frac{(n-2)(n-3)}{2}\leq d(C_{n})$.

\vspace{10pt}\noindent Take $P_{n}$, a convex $n$-gon, and label its vertices with the side disks of $C_{n}$ such that two disks of $C_{n}$ are neighbouring if and only if their associated vertices in $P_{n}$ are adjacent. Draw all $\frac{n(n-1)}{2}-n$ diagonals of $P_{n}$, and colour them with 
\begin{itemize}
\item Red if the side disks corresponding to the end vertices intersect, and
\item Blue if otherwise.
\end{itemize}
By STEP 2 we know that two red diagonals never cross in $P_{n}$. The maximal set of non-crossing diagonals of $P_{n}$ gives a triangulation of it, and every triangulation involves $n-3$ diagonals (see Theorem 1.8 in \cite{princeton}). Thus, the number of red diagonals is at most $n-3$, and the number of blue diagonals is at least $\frac{n(n-1)}{2}-n-(n-3)=\frac{(n-2)(n-3)}{2}$. This immediately implies that $\frac{(n-2)(n-3)}{2}\leq d(C_{n})$, and completes STEP 3. The lefthand inequality in Theorem \ref{circle-thm} is proved. Finally, since two neighbouring disks are never disjoint we have $d(C_{n})\leq \frac{n(n-3)}{2}$.
\end{proof}
{\it Remarks}: It is easy to show that the upper bound in Theorem \ref{circle-thm} is attained on $C_{n}^{\star}$, i.e. it is tight. Let $C_{n}^{\triangle}$ be a spherical great $n$-gon such that one of its side disks intersects with all the others, and any two of the other side disks intersect only if they are neighbouring. It is easy to show that this construction is well defined and $d(C_{n}^{\triangle})=\frac{(n-2)(n-3)}{2}$. Thus, the lower bound is also tight for $n\geq 3$.

We can now characterize the intersection graph of side disks of $C_{n}$.
\begin{theorem}
\label{planar}
The intersection graph of side disks of $C_{n}$ for $n\geq 3$ is a subgraph of a triangulation of a convex $n$-gon. In particular, it is outerplanar.
\end{theorem}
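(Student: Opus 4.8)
The plan is to reuse the colouring machinery already developed in the proof of Theorem~\ref{circle-thm}. First I would fix the identification of the intersection graph $G$ of the side disks with a graph drawn on the convex $n$-gon $P_n$: label the vertices of $P_n$ by $\omega_{1},\dots,\omega_{n}$ in clockwise order, matching the cyclic order of the sides of $C_n$, so that the sides of $P_n$ join neighbouring disks and the diagonals of $P_n$ join non-neighbouring ones. Since two neighbouring disks share a vertex of $C_n$ and therefore always intersect, every side of $P_n$ is an edge of $G$. A diagonal of $P_n$ joining $\omega_i$ and $\omega_j$ is an edge of $G$ precisely when these two disks intersect, i.e. precisely when it is coloured red in the sense of STEP 3 in the proof of Theorem~\ref{circle-thm}.

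Next I would record that the red diagonals are pairwise non-crossing, which is exactly the content of STEP 2. Indeed, if two red diagonals joining $\omega_i,\omega_j$ and $\omega_k,\omega_\ell$ crossed inside $P_n$, then $\omega_k$ and $\omega_\ell$ would lie on opposite sides of the chord $\omega_i\omega_j$, so that one of them belongs to $(\omega_i,\omega_j)$ and the other to $(\omega_j,\omega_i)$. Since $\omega_i$ and $\omega_j$ intersect, STEP 2 forces $\omega_k$ and $\omega_\ell$ to be disjoint, contradicting that their joining diagonal is red. Hence the red diagonals form a non-crossing family.

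The key remaining step is to complete this non-crossing family to a full triangulation. Any set of pairwise non-crossing diagonals of a convex polygon extends to a triangulation by adding further non-crossing diagonals until every interior face is a triangle (again Theorem 1.8 in \cite{princeton}). Let $T$ be such a triangulation containing all the red diagonals. Then the edge set of $G$ --- the sides of $P_n$ together with the red diagonals --- is contained in the edge set of $T$, which consists of the sides of $P_n$ together with the $n-3$ diagonals of the triangulation. Therefore $G$ is a subgraph of the triangulation $T$, establishing the first assertion.

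Finally, outerplanarity comes for free: a triangulation of a convex $n$-gon has all its vertices lying on the boundary of a single (outer) face, so it is outerplanar, and outerplanarity is inherited by subgraphs. Thus $G$ is outerplanar. I expect the only delicate point to be the clean translation of ``crossing diagonals'' into the $(\omega_i,\omega_j)$ versus $(\omega_j,\omega_i)$ partition exploited in STEP 2; once that bridge is made explicit, the rest is bookkeeping layered on top of the earlier results.
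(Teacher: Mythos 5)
Your proposal is correct and follows essentially the same route as the paper: both rely on STEP 2 of Theorem \ref{circle-thm} to conclude that the edges of the intersection graph, drawn as sides and diagonals of the convex $n$-gon $P_n$, are pairwise non-crossing, and then observe that any non-crossing family extends to a triangulation, whose subgraphs are outerplanar. Your write-up merely makes explicit two points the paper leaves implicit --- the translation of ``crossing diagonals'' into the $(\omega_i,\omega_j)$ versus $(\omega_j,\omega_i)$ partition, and the completion of the non-crossing family to a full triangulation --- which is a welcome clarification but not a different argument.
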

\begin{proof} Let $G(C_{n})$ be the intersection graph. The result is obvious when $n=3$. For $n\geq 4$, in STEP 2 of proof of Theorem \ref{circle-thm} we showed that $G(C_{n})$ can be drawn with no crossing edges. So, it is a subgraph of triangulation of the convex polygon with vertices at the centers of the side disks, hence outerplanar.
\end{proof}

\end{document}